\theoremstyle{plain}
\newtheorem{Theorem}{Theorem}
\newtheorem{Lemma}{Lemma}
\begin{document}
\title[Povzner--Wienholtz-type theorems]{Povzner--Wienholtz-type theorems for Sturm--Liouville operators with singular coefficients}

\author[A. Goriunov, V. Mikhailets, V. Molyboga] {Andrii Goriunov, Vladimir Mikhailets, Volodymyr Molyboga}

\email[Andrii Goriunov]{goriunov@imath.kiev.ua}

\email[Vladimir Mikhailets]{mikhailets@imath.kiev.ua}

\email[Volodymyr Molyboga]{molyboga@imath.kiev.ua}

\address{Institute of Mathematics of NAS of Ukraine \\
         Tereshchenkivska str., 3 \\
         Kyiv \\
         Ukraine \\
         01024}
\thanks{The authors were supported by the European Union’s Horizon 2020 research and innovation programme 
	under the Marie Sklodowska-Curie grant agreement No 873071 
	(SOMPATY: Spectral Optimization: From Mathematics to Physics and Advanced Technology).} 

\keywords{self-adjointness problem; Sturm-Liouville operator; singular coefficients}

\begin{abstract}
We introduce and investigate symmetric operators $L_0$ associated in the complex Hilbert space $L^2(\mathbb{R})$ 
with a formal differential expression  
\[l[u] :=-(pu')'+qu + i((ru)'+ru')  \]
under minimal conditions on the regularity of the coefficients. 
They are assumed to satisfy conditions 
\[q=Q'+s;\quad \frac{1}{\sqrt{|p|}}, \frac{Q}{\sqrt{|p|}}, \frac{r}{\sqrt{|p|}} \in L^2_{loc}\left(\mathbb{R}\right), \quad s \in L^1_{loc}\left(\mathbb{R}\right), \quad\frac{1}{p}\neq 0\,\,\text{a.e.,} \]
where the derivative of the function $Q$ is understood in the sense of distributions, and all functions $p$, $Q$, $r$, $s$ are real-valued.
In particular, the coefficients $q$ and $r'$ may be Radon measures on $\mathbb{R}$, while function $p$ may be discontinuous. 
The main result of the paper are constructive sufficient conditions on the coefficient $p$ 
which provide that the operator $L_0$ being semi-bounded implies it being self-adjoint. 
\end{abstract}

\maketitle

\section{Introduction}
The problem of symmetric operators being self-adjoint is one of the main problems in the theory of differential operators 
and serves as the basis for the analysis of their spectral properties and scattering problems 
(see, e.~g.~\cite{Reed-book1}). 
Investigation of this problem for the Sturm--Liouville and Schr\"{o}dinger operators in the spaces $L^2(\mathbb{R}^n)$
is inspired by the problems of mathematical physics and has numerous applications. 
The results that were obtained for this problem in the case of regular coefficients are rather complete. 
Hartman \cite{Hart} and  Rellich \cite{Rellich} were the first to show that 
boundedness from below of the operator 
generated by the Sturm--Liouville differential expression
\[l(u) :=-(pu')'+qu \]
in the Hilbert space $L^2(\mathbb{R})$ together with the integral condition on the function $p>0$ 
(which is obviously satisfied if $p\equiv 1$)  
are sufficient for the minimal operator to be self-adjoint.

Povzner \cite{Povzner} and later independently Wienholtz \cite{Wienholtz} established that preminimal operator 
\[ \mathrm{L}_{00}u = -\Delta u + qu, \qquad u \in C_0^\infty (\mathbb{R}^n) \]
being semi-bounded implies it being essentially self-adjoint in the space $L^2(\mathbb{R}^n)$, 
if function $q$ is real-valued and continuous. 
Later conditions on the regularity of potential $q$ were significantly weakened in papers \cite{Hinz,Schmincke,Simader1,Simader2}. 

A more general result regarding the self-adjointness of second-order symmetric elliptic operators with smooth coefficients
was received by Yu.~M.~Berezanskii \cite[ch.~VI.1.7]{Berezanskii-book}. 
It states that the semi-bounded differential operator $\mathrm{L}_{00}$ defined on $C_0^\infty(\mathbb{R}^n)$ 
is essentially self-adjoint in the Hilbert space $L^2(\mathbb{R}^n)$, 
if the condition of a globally finite rate of propagation is satisfied,
that is, each solution to a hyperbolic differential equation
\[u_{tt} + Lu = 0 \]
which has compact support for $t=0$ has compact support for any $t>0$. 
These results were developed further in papers \cite{Berezanskii-Samoilenko,Orochko,Rofe-Beketov},  
see also \cite{Braverman} and the bibliography therein. 

At the same time, due to problems of the mathematical physics there arose and in the recent years has been increasing 
an interest towards differential operators with singular coefficients 
which are not locally summable functions  (see \cite{Albeverio,Teschl,Mikhailets0} and the references therein). 
Analysis of such operators presents significant mathematical difficulties, 
since their domain does not allow an explicit description and 
may not contain smooth functions other then zero. 
Moreover, the correct definition of such operators in the case of singular coefficients
is a non-trivial problem which may be of considerable interest for the theory of differential operators and its applications. 

In the present paper we solve this problem for the Sturm--Liouville operators, 
applying an approach based on the theory of quasi-differential operators  (see \cite{Zettl,Goriunov1,Goriunov2,Eckhardt}). 
This approach is natural from an analytical point of view, 
since it allows to use the techniques and methods of the theory of ordinary differential equations.
Furthermore, we show that the minimal operator $L_0$ generated by the formal differential expression $l$ 
has a dense in $L^2(\mathbb{R})$ domain and is symmetric.
Therefore, the question naturally arises about the conditions for it to be self-adjoint. 
Substantial answer to it is provided by two new theorems of the Povzner--Wingoltz type, which are formulated in Section 2.
The proofs of these theorems are given in Section 4 of this paper. 
It uses a number of auxiliary statements outlined in Section 3. 
 
\section{Povzner--Wienholtz-type results}
We consider operators generated by the differential Sturm--Liouville expression 
\begin{equation}\label{SL}
l[u] :=-(pu')'+qu + i((ru)'+ru')
\end{equation}
with real coefficients $p$, $q$ and $r$ given on $\mathbb{R}$. 
If these coefficients are regular enough, then the mapping 
\[ \mathrm{L}_{00}: u \mapsto l[u], \qquad u \in C_0^\infty(\mathbb{R}) \]
defines a densely defined in the complex Hilbert space $L^2(\mathbb{R})$ 
preminimal symmetric operator $\mathrm{L}_{00}$. 
Here naturally arises question whether the closure of this operator 
$\mathrm{L}_0:=\left(\mathrm{L}_{00}\right)^{\sim}$ is self-adjoint. 
A large number of papers are devoted to this problem (see, e. g. references in \cite{Zettl-book}). 
For instance, Hartman \cite{Hart} and  Rellich \cite{Rellich} established that 
if operator $\mathrm{L}_{00}$ is bounded from below and  
\[r\equiv 0, \qquad 0<p\in C^2(\mathbb{R}), \qquad q \text{ is piecewise continuous on } \mathbb{R},\]
and function $p$ satisfies condition 
\begin{equation}\label{cond_HR}
\int_{0}^{\infty}p^{-1/2}(t)d\,t=\int_{-\infty}^{0}p^{-1/2}(t)d\,t=\infty, 
\end{equation}
then minimal operator $\mathrm{L}_0$ corresponding to $l$ is self-adjoint. 
In the paper \cite{Stetkaer-Hansen}  the conditions on the regularity of the coefficients of $l$ were weakened: 
\[r\equiv 0, \qquad 0<p \text{ is locally Lipschitz,} \qquad q\in L^2_{loc}(\mathbb{R}).\]
Another sufficient condition for the operator  $\mathrm{L}_0$ to be self-adjoint was obtained in \cite{Clark}. 
It may be written in the form 
\begin{equation}\label{cond_CG}
\|p\|_{L^\infty(-\rho, -\rho/2)}, \|p\|_{L^\infty(\rho/2, \rho)} = O(\rho^2), \qquad \rho \rightarrow \infty.
\end{equation}
Here the coefficients of \eqref{SL} satisfy conditions 
\[r\equiv 0, \qquad  0<p \in W^1_{2,loc}(\mathbb{R}), \qquad q \in L^1_{loc}(\mathbb{R}).\]
Examples show that conditions \eqref{cond_HR} and \eqref{cond_CG} are independent (see \cite{Clark}). 

We will assume throughout what follows that the assumptions 
\begin{equation}\label{cond_GMM}
q=Q'+s;\quad \frac{1}{\sqrt{|p|}}, \frac{Q}{\sqrt{|p|}}, \frac{r}{\sqrt{|p|}} \in L^2_{loc}\left(\mathbb{R}\right), \quad s \in L^1_{loc}\left(\mathbb{R}\right),   \quad\frac{1}{p}\neq 0\,\,\text{ a. e.,}
\end{equation}
hold, where the derivative of function $Q$ is understood in the sense of distributions 
and all the coefficients $p$, $Q$, $s$, $r$ are real-valued functions.

We propose to consider the operators generated by the formal differential expression \eqref{SL} as quasi-differential operators, 
which are defined applying compositions of differential operators with locally summable coefficients.
These operators are defined using the Shin--Zettl matrix function 
specifically chosen to correspond to the coefficients of $l$ 
(see \cite{Zettl,Goriunov1,Goriunov2,Eckhardt}).

In our case it has the form
\begin{equation}\label{SZ_matrix}
A(x)= \begin{pmatrix}
\frac{Q+ir}{p}&\frac{1}{p}\\
-\frac{Q^2+r^2}{p} + s&-\frac{Q-ir}{p}
\end{pmatrix} 
\end{equation}
and due to our assumptions belongs to the class $L^1_{loc}(\mathbb{R}, \mathbb{C}^2)$.

It can be used to define corresponding quasi-derivatives as follows: 
\begin{align}\label{quasideriv}
u^{[0]}:=u,\qquad  u^{[1]}:=pu' - (Q+ir)u, \qquad
u^{[2]}:=\left(u^{[1]}\right)'+\frac{Q-ir}{p}u^{[1]}+\left(\frac{Q^2+r^2}{p} - s\right)u.
\end{align}

Formal differential expression \eqref{SL} may now be defined as quasi-differential:
\begin{equation*}
l[u]:=-u^{[2]},\quad \mathrm{Dom}(l):=\left\{u:\mathbb{R}\rightarrow \mathbb{C}\left|\,u,\,u^{[1]}\in
\mathrm{AC}_{loc}(\mathbb{R})\right.\right\}.
\end{equation*}
This definition is motivated by the fact that 
\begin{equation*}\label{eq_20}
\langle -u^{[2]},\varphi\rangle=\langle-(pu')'+qu + i((ru)'+ru'),\varphi\rangle\qquad \forall\varphi\in
\mathrm{C}_{0}^{\infty}(\mathbb{R})
\end{equation*}
in the sense of distributions.

We define for the quasi-differential expression $l$ the operators $\mathrm{L}$ and $\mathrm{L}_{00}$ as:
\begin{align*}
\mathrm{L}u:=l[u],\qquad 
&\mathrm{Dom}(\mathrm{L}):= 
\left\{u\in L^2(\mathbb{R})\,\left|\,u,\,u^{[1]}\in \mathrm{AC}_{loc}(\mathbb{R}),l[u]\in   L^{2}(\mathbb{R})\right.\right\},\\
\mathrm{L}_{00}u:=\mathrm{L}u, \qquad 
&\mathrm{Dom}(\mathrm{L}_{00}):=\left\{u\in \mathrm{Dom}(\mathrm{L})\,  \left|\,\mathrm{supp}\,u\Subset\mathbb{R}\right.\right\}. \hspace{60pt}
\end{align*}
The operators $\mathrm{L}$ and $\mathrm{L}_{00}$ are maximal and preminimal operators for expression $l$ respectively. 
Their definitions coincide with the classical ones if the coefficients $l$ are sufficiently smooth. 
Below in Section \ref{sec_PrlRs} we will show that the operator $\mathrm{L}_{00}$ is densely defined in $L^2(\mathbb{R})$ and is symmetric. 

Let us formulate the main results of the paper in the form of two theorems. 
The first of them is a natural generalization of the above-mentioned result of Hartman and Rellich. 

\begin{Theorem}\label{th_B}
Let the coefficients of the formal differential expression \eqref{SL} satisfy the assumptions \eqref{cond_GMM} 
and also 
\begin{align*}
\mathtt{(i)}\hspace{5pt} & p\in W^1_{2,loc}(\mathbb{R}), \hspace{5pt}  p>0, \\
\mathtt{(ii)}\hspace{5pt} & \int_{-\infty}^{0}p^{-1/2}(t)d\,t=\int_{0}^{\infty}p^{-1/2}(t)d\,t=\infty. \hspace{250pt}
\end{align*}
Then, if operator $\mathrm{L}_{00}$ is bounded from below, then it is essentially self-adjoint 
and $\mathrm{L}_{00}^* = \mathrm{L} = \mathrm{L}^*$. 
\end{Theorem}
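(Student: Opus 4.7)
The plan is to adapt the classical Povzner--Wienholtz argument to the present quasi-differential setting, following a four-step strategy.

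\emph{Reduction.} Section~3 establishes that $\mathrm{L}_{00}$ is densely defined and symmetric and that $\mathrm{L}_{00}^{*}=\mathrm{L}$. For a lower semi-bounded symmetric operator $\mathrm{L}_{00}\geq -C$, essential self-adjointness is equivalent to $\ker(\mathrm{L}_{00}^{*}+\mu)=\{0\}$ for some $\mu>C$. Thus it suffices to show that every $u\in\mathrm{Dom}(\mathrm{L})$ satisfying $l[u]=-\mu u$ vanishes, for one fixed $\mu>C$. Once essential self-adjointness is in hand, $\mathrm{L}_{00}^{*}=\mathrm{L}=\mathrm{L}^{*}$ follows by taking adjoints, since $\mathrm{L}^{*}=\mathrm{L}_{00}^{**}=\overline{\mathrm{L}_{00}}$ is self-adjoint and coincides with $\mathrm{L}_{00}^{*}=\mathrm{L}$.

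\emph{Cut-off construction.} Set $F(x):=\int_{0}^{x}p^{-1/2}(t)\,dt$. The hypotheses $p>0$ and $p^{-1/2}\in L^{2}_{\mathrm{loc}}\subset L^{1}_{\mathrm{loc}}$ make $F$ absolutely continuous and strictly increasing, while condition~(ii) forces it to be a homeomorphism of $\mathbb{R}$. Fix a standard cut-off $\phi\in C^{\infty}_{c}(\mathbb{R})$ with $\phi\equiv 1$ on $[-1,1]$ and $\mathrm{supp}\,\phi\subset[-2,2]$, and define $\eta_n(x):=\phi(F(x)/n)$. Then $\eta_n$ has compact support, $\eta_n\to 1$ pointwise, and
\[
p(x)|\eta_n'(x)|^{2}=n^{-2}|\phi'(F(x)/n)|^{2}\leq n^{-2}\|\phi'\|_{\infty}^{2}.
\]
Hypothesis~(i), namely $p\in W^{1}_{2,\mathrm{loc}}$, combined with $p^{-1/2}\in L^{2}_{\mathrm{loc}}$, yields $(\sqrt{p})'=p'/(2\sqrt{p})\in L^{1}_{\mathrm{loc}}$ by Cauchy--Schwarz, and this in turn makes $p\eta_n'=\sqrt{p}\,\phi'(F/n)/n$ locally absolutely continuous. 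From these properties one verifies $\eta_n u\in\mathrm{Dom}(\mathrm{L}_{00})$ for every $u\in\mathrm{Dom}(\mathrm{L})$.

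\emph{Energy identity and conclusion.} Applying the quasi-derivative product rule to \eqref{quasideriv} produces
\[
(\eta_n u)^{[2]}=\eta_n\,u^{[2]}+2p\eta_n' u'-2ir\eta_n' u+(p\eta_n')'u,
\]
equivalently $l[\eta_n u]=\eta_n\,l[u]-2p\eta_n' u'+2ir\eta_n' u-(p\eta_n')'u$. Taking the $L^{2}$-pairing with $\eta_n u$ and then the real part, the imaginary $r$-contribution drops out; using $\mathrm{Re}(u'\bar u)=\tfrac12(|u|^{2})'$ and integrating by parts twice (boundary terms vanish by compactness of $\mathrm{supp}\,\eta_n$), the remainder collapses to
\[
\mathrm{Re}\,\langle l[\eta_n u],\eta_n u\rangle=\mathrm{Re}\,\langle \eta_n^{2}\,l[u],u\rangle+\int_{\mathbb{R}}p|\eta_n'|^{2}|u|^{2}\,dx.
\]
Substituting $l[u]=-\mu u$ and using the lower bound $\mathrm{L}_{00}\geq -C$ on the admissible test function $\eta_n u\in\mathrm{Dom}(\mathrm{L}_{00})$,
\[
(\mu-C)\|\eta_n u\|^{2}\leq\int_{\mathbb{R}}p|\eta_n'|^{2}|u|^{2}\,dx\leq n^{-2}\|\phi'\|_{\infty}^{2}\|u\|^{2}\longrightarrow 0.
\]
Since $\eta_n u\to u$ in $L^{2}$ by dominated convergence, we conclude $u=0$.

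The main obstacle is the rigorous justification of the energy identity under the weak regularity of the coefficients: one must validate the product rule, the integration by parts involving the AC function $p\eta_n\eta_n'$ and the distributional derivative $(|u|^{2})'$, all starting from \eqref{cond_GMM} plus $p\in W^{1}_{2,\mathrm{loc}}$. A secondary technical point is the verification $\eta_n u\in\mathrm{Dom}(\mathrm{L}_{00})$, whose only non-trivial ingredient, $l[\eta_n u]\in L^{2}(\mathbb{R})$, reduces to a local integrability check on $\mathrm{supp}\,\eta_n$.
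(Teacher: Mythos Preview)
Your proposal is correct and follows essentially the same route as the paper: the change of variable $F(x)=\int_0^x p^{-1/2}$ (the paper's $\rho$), cut-offs $\eta_n=\phi(F/n)$ (the paper uses $\varphi_n(\rho)$ with $\varphi_n$ supported in $[-n-1,n+1]$, which is the same up to reparametrisation), the product rule for $l[\eta_n u]$, and the energy identity reducing to $\int p|\eta_n'|^2|u|^2$ after taking real parts all match the paper's argument. Your outline is in fact more explicit than the paper's proof of Theorem~\ref{th_B}, which states the final inequality without displaying the intermediate computation (the analogous calculation appears only in the proof of Theorem~\ref{th_C}); your verification that $\eta_n u\in\mathrm{Dom}(\mathrm{L}_{00})$ is what the paper packages as Lemma~\ref{lm_phi_u}, and one small simplification is that under hypothesis~$\mathtt{(i)}$ the function $p^{-1/2}$ is continuous and locally bounded, so $(\sqrt{p})'\in L^2_{\mathrm{loc}}$ directly rather than merely $L^1_{\mathrm{loc}}$, which immediately gives $(p\eta_n')'\in L^2_{\mathrm{loc}}$ and hence $l[\eta_n u]\in L^2$.
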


For the case $p\equiv 1$, $r\equiv 0$ Theorem~\ref{th_B} was previously established in~\cite{Mikhailets1}. 

In the second theorem, additional conditions on the coefficient $p$ are imposed not on the entire axis,
but only on a sequence of finite intervals. 
However, outside of these intervals the function $p$ may vanish and be discontinuous. 

\begin{Theorem}\label{th_C}
Suppose the assumptions \eqref{cond_GMM} are satisfied and the operator $\mathrm{L}_{00}$ is bounded from below. 
Suppose the sequence of intervals $\Delta_n:=[a_n,b_n]$ exists such that 
\[\quad -\infty< a_{n}<b_{n}<\infty, \quad b_{n}\rightarrow -\infty,\;n\rightarrow -\infty,\quad a_{n}\rightarrow \infty,\;n\rightarrow\infty,\]
where the coefficients $p$ satisfy the additional conditions 
\begin{itemize}
	\item [$\mathtt{(i)}$]  $p_{n}:=p|_{\Delta_{n}}\in W^1_2(\Delta_n)$, $p_n>0$; 
	\item [$\mathtt{(ii)}$] $\exists C>0:\; p_n(x)\leq C|\Delta_n|^2$, $n\in\mathbb{Z}$,
	where $|\Delta_n|$ is the length of interval~$\Delta_n$. 
\end{itemize}
	
Then operator $\mathrm{L}_{00}$ is essentially self-adjoint and $\mathrm{L}_{00}^* = \mathrm{L} = \mathrm{L}^*$. 
\end{Theorem}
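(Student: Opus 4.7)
The plan follows the classical Povzner--Wienholtz cutoff scheme, adapted to the quasi-differential setting of $l$. Since $\mathrm{L}_{00}$ is symmetric and semi-bounded by hypothesis, with lower bound $m$, essential self-adjointness is equivalent to $\ker(\mathrm{L} - \lambda I) = \{0\}$ for any $\lambda < m$ (the required symmetry/density of $\mathrm{L}_{00}$ being furnished by the auxiliary results of Section~\ref{sec_PrlRs}). Accordingly, I would fix such a $\lambda$, take $u \in \mathrm{Dom}(\mathrm{L})$ with $\mathrm{L}u = \lambda u$, and aim to show $u \equiv 0$.

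\textbf{Cutoff construction.} Using the given intervals $\Delta_n=[a_n,b_n]$, define piecewise-affine cutoffs $\varphi_N$ equal to $1$ on $[b_{-N},a_N]$, equal to $0$ outside $[a_{-N},b_N]$, and affine on $\Delta_{\pm N}$, so that $|\varphi_N'|\le |\Delta_{\pm N}|^{-1}$ on those two intervals and $\varphi_N'\equiv 0$ elsewhere. Since $\varphi_N'$ is constant on $\Delta_{\pm N}$ and hypothesis $\mathtt{(i)}$ gives $p|_{\Delta_{\pm N}}\in W^1_2(\Delta_{\pm N})$, a direct inspection of \eqref{quasideriv} shows that $\varphi_N u$ together with its first quasi-derivative is locally absolutely continuous, has compact support, and $l[\varphi_N u]\in L^2(\mathbb{R})$; hence $\varphi_N u\in\mathrm{Dom}(\mathrm{L}_{00})$.

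\textbf{Localisation identity.} A direct computation from \eqref{SL}, carried out on the matrix level via \eqref{SZ_matrix}--\eqref{quasideriv} to legitimise the distributional derivatives, yields the commutator formula
\[ l[\varphi_N u]=\varphi_N\, l[u]-(p\varphi_N' u)'-p\varphi_N' u'+2ir\varphi_N' u. \]
Pairing with $\varphi_N u$, integrating by parts (justified by compact support), and taking the real part eliminates the purely imaginary contributions arising from $u\bar u'-u'\bar u$ and from the $r$-term (which is precisely why the coefficient $r$ enters $l$ in the symmetric combination $i((ru)'+ru')$). This leaves the clean identity
\[ \langle \mathrm{L}_{00}(\varphi_N u),\varphi_N u\rangle=\lambda\|\varphi_N u\|^2+\int_{\mathbb{R}}p\,(\varphi_N')^2|u|^2\,dx. \]

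\textbf{Semi-boundedness and the limit.} Semi-boundedness gives $\langle \mathrm{L}_{00}(\varphi_N u),\varphi_N u\rangle\ge m\|\varphi_N u\|^2$, so
\[ (m-\lambda)\|\varphi_N u\|^2\le\int_{\Delta_{-N}\cup\Delta_N}p\,(\varphi_N')^2|u|^2\,dx. \]
Hypothesis $\mathtt{(ii)}$ bounds the integrand pointwise by $C|u|^2$ on $\Delta_{\pm N}$, so the right-hand side is majorised by $C(\|u\|_{L^2(\Delta_{-N})}^2+\|u\|_{L^2(\Delta_N)}^2)$, which tends to zero as $N\to\infty$ because $u\in L^2(\mathbb{R})$ and the intervals escape to $\pm\infty$. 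Since $\|\varphi_N u\|\to\|u\|$ by dominated convergence and $m>\lambda$, one concludes $u\equiv 0$.

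\textbf{Main obstacle.} The step that needs the most care is the localisation identity and the associated claim that $\varphi_N u\in\mathrm{Dom}(\mathrm{L}_{00})$: in the singular quasi-differential framework the individual symbols $(pu')'$ and $(ru)'$ are not separately meaningful, so the commutator computation has to be executed through the Shin--Zettl matrix \eqref{SZ_matrix} and the quasi-derivative product rule, with only Lipschitz (not smooth) cutoffs $\varphi_N$. Once this regularity bookkeeping is secured by the lemmas of Section~3, the remainder of the argument uses hypothesis $\mathtt{(ii)}$ purely as a quantitative control on $p(\varphi_N')^2$ and nothing else about the behaviour of $p$ outside the intervals $\Delta_n$.
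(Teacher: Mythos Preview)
Your overall strategy---cutoff, commutator identity, real part, semi-boundedness, pass to the limit---is exactly the one the paper uses, and your localisation identity and endgame are correct.

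There is, however, one concrete gap: piecewise-affine cutoffs do \emph{not} put $\varphi_N u$ into $\mathrm{Dom}(\mathrm{L}_{00})$. From \eqref{quasideriv} one has
\[
(\varphi_N u)^{[1]}=p\,\varphi_N'\,u+\varphi_N\,u^{[1]},
\]
and for a piecewise-affine $\varphi_N$ the derivative $\varphi_N'$ is a step function with jumps at the four endpoints $a_{\pm N},b_{\pm N}$. Since neither $p$ nor $u$ need vanish there, the term $p\,\varphi_N'\,u$ has jump discontinuities, so $(\varphi_N u)^{[1]}\notin \mathrm{AC}_{loc}(\mathbb{R})$ and $l[\varphi_N u]$ picks up point masses (your own commutator formula contains $(p\varphi_N' u)'$, which is then not an $L^2$ function). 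Lemma~\ref{lm_phi_u_int}, which you invoke from Section~\ref{sec_PrlRs}, explicitly requires $\varphi\in C^2(\mathbb{R})$ for precisely this reason.

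The fix is painless and is what the paper does: replace your affine transitions by $C^\infty$ cutoffs supported on $\Delta_{\pm N}$ with $|\varphi_N'|\le K|\Delta_{\pm N}|^{-1}$ for a universal constant $K$. Lemma~\ref{lm_phi_u_int} then applies verbatim, and hypothesis~$\mathtt{(ii)}$ still gives $p(\varphi_N')^2\le CK^2$ on $\Delta_{\pm N}$, so nothing else in your argument changes.
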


For the case $p\equiv 1$, $r\equiv 0$ necessary and sufficient conditions for semi-boundedness of operator $\mathrm{L}_{00}$ 
were obtained in \cite{Mikhailets3}. 

\section{Preliminary results}\label{sec_PrlRs}
Let us formulate some auxiliary statements that are used in the proof of Theorems \ref{th_B} and~\ref{th_C}. 
Some of them may be of interest by themselves. 
Let $\Delta=(\alpha,\beta)$ be a finite interval in $\mathbb{R}$.

Consider the Sturm--Liouville operators generated by the formal differential expression \eqref{SL} 
in the complex Hilbert space $L^{2}(\Delta)$. 
Our assumptions \eqref{cond_GMM} imply that 
\[ q=Q'+s,\quad \frac{1}{\sqrt{|p|}}, \;\frac{Q}{\sqrt{|p|}},\; \frac{r}{\sqrt{|p|}} \in L^{2}(\Delta),\quad 
s \in L^{1}(\Delta), \quad \frac{1}{p}\neq 0\quad\text{a. e. on } \Delta.\]
Therefore, the Shin--Zettl matrix of the form \eqref{SZ_matrix} is correctly defined, 
which defines corresponding quasi-derivatives by the formulas \eqref{quasideriv}.

Let us now consider a quasi-differential expression $l$ on the interval $\Delta$ 
\begin{align*}
l_{\Delta}[u] & :=-u^{[2]},\quad
\mathrm{Dom}(l_\Delta):=\left\{u\in L^2(\Delta)\left|\,u,\,u^{[1]}\in
\mathrm{AC}(\overline{\Delta})\right.\right\}, 
\end{align*}
and the corresponding minimal and maximal operators
\begin{align*}
\mathrm{L}_\Delta u & :=l_\Delta[u], \qquad
\mathrm{Dom}(\mathrm{L}_\Delta) :=
\left\{u\in L^2(\Delta)\left|u,\,u^{[1]}\in  \mathrm{AC}(\overline{\Delta}),l_\Delta[u]\in L^2(\Delta)\right.\right\}, \\
\mathrm{L}_{0\Delta}u & :=\mathrm{L}_\Delta u, \qquad
\mathrm{Dom}(\mathrm{L}_{0\Delta}) :=\left\{u\in \mathrm{Dom}(\mathrm{L}_\Delta) \,\left|\,u^{[j]}(\alpha)=u^{[j]}(\beta)=0,\; j=0,1\right.\right\},
\end{align*}

\begin{Lemma}[Theorems~9, 10 \cite{Zettl}]\label{lm_Zettl}
Operator $\mathrm{L}_{0\Delta}$ has a domain which is dense in the space $L^{2}(\Delta)$ and 
\begin{align*}
\mathtt{(i)}\hspace{5pt} & (\mathrm{L}_{0\Delta})^* = \mathrm{L}_\Delta, \\
\mathtt{(ii)}\hspace{5pt} & \mathrm{L}_{0\Delta} = (\mathrm{L}_\Delta)^*.
\end{align*}
\end{Lemma}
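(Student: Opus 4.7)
The approach is to follow the standard Shin--Zettl framework for regular quasi-differential operators on a finite interval. The analytical cornerstone is the Lagrange (Green's) identity
\[ \int_\alpha^\beta \bigl( l_\Delta[u]\,\overline{v} - u\,\overline{l_\Delta[v]} \bigr)\,dx = [u,v](\beta) - [u,v](\alpha), \qquad [u,v] := u^{[1]}\overline{v} - u\,\overline{v^{[1]}}, \]
valid for all $u,v \in \mathrm{Dom}(l_\Delta)$; it is a direct computation from the definition \eqref{quasideriv} and reflects the formal self-adjointness of $l$ that is encoded in the symmetry of the Shin--Zettl matrix \eqref{SZ_matrix}. The boundary bracket is well-defined because $u$ and $u^{[1]}$ are absolutely continuous up to the endpoints.

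\emph{Density of $\mathrm{Dom}(\mathrm{L}_{0\Delta})$.} I would rewrite $l[u] = f$ as the first-order linear system $Y' = AY + F$, whose coefficient matrix $A$ lies in $L^1(\Delta,\mathbb{C}^{2\times 2})$ by \eqref{cond_GMM}; its Cauchy problem is uniquely solvable on $\overline{\Delta}$ for every $f \in L^1(\Delta)$. Given $[x_1,x_2] \Subset (\alpha,\beta)$ and $f \in L^2(\Delta)$ supported in $[x_1,x_2]$ subject to the two Wronskian-moment constraints that ensure vanishing Cauchy data at $x_2$, the solution of $l[u] = f$ with zero Cauchy data at $\alpha$ is compactly supported inside $(\alpha,\beta)$ and therefore lies in $\mathrm{Dom}(\mathrm{L}_{0\Delta})$. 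Varying $x_1, x_2$ and $f$, and employing a partition-of-unity localization, one sees that such $u$'s form a dense subspace of $L^2(\Delta)$.

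\emph{Adjoint relations.} The inclusion $\mathrm{L}_\Delta \subset (\mathrm{L}_{0\Delta})^*$ is immediate from Green's formula, whose boundary term vanishes for $u \in \mathrm{Dom}(\mathrm{L}_{0\Delta})$. For the reverse inclusion, take $v \in \mathrm{Dom}((\mathrm{L}_{0\Delta})^*)$ with $(\mathrm{L}_{0\Delta})^* v = w$, and, via variation of parameters in the system $Y' = AY + F$, fix a particular $u_0 \in \mathrm{Dom}(l_\Delta)$ with $l_\Delta[u_0] = w$. For every $u \in \mathrm{Dom}(\mathrm{L}_{0\Delta})$, Green's identity yields $(\mathrm{L}_{0\Delta} u, u_0) = (u, w) = (\mathrm{L}_{0\Delta} u, v)$, so $u_0 - v$ is orthogonal to $\mathrm{Ran}(\mathrm{L}_{0\Delta})$. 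The density from the previous step implies that this orthogonal complement consists of classical solutions of the homogeneous equation $l[\cdot] = 0$, so $v - u_0$ is itself such a solution; in particular, $v, v^{[1]} \in \mathrm{AC}(\overline{\Delta})$ and $l[v] = w$, that is, $v \in \mathrm{Dom}(\mathrm{L}_\Delta)$ with $\mathrm{L}_\Delta v = w$. This proves (i). Assertion (ii) follows by taking adjoints in (i) and using that $\mathrm{L}_{0\Delta}$ is closed, since the four boundary-value functionals $u \mapsto u^{[j]}(\alpha)$, $u \mapsto u^{[j]}(\beta)$ are continuous in the graph norm of $\mathrm{L}_\Delta$.

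\emph{Main obstacle.} The delicate point is the regularity step that promotes an element of the abstract adjoint domain into a classical element of $\mathrm{Dom}(\mathrm{L}_\Delta)$. Under the singular assumptions \eqref{cond_GMM}, the standard device of testing against $C_0^\infty(\Delta)$ is unavailable: smooth compactly supported functions need not even belong to $\mathrm{Dom}(\mathrm{L}_\Delta)$, because the product $pu'$ may fail to be absolutely continuous when $p$ is only locally in $W^1_2$. The quasi-differential framework compensates by providing a sufficiently rich family of compactly supported members of $\mathrm{Dom}(\mathrm{L}_{0\Delta})$, parametrised by the inhomogeneity $f$ in the system $Y' = AY + F$, and it is precisely this family that powers the weak-regularity argument underlying Theorems~9 and~10 of \cite{Zettl}.
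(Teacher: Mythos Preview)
The paper does not supply a proof of this lemma at all: it is quoted verbatim as Theorems~9 and~10 of Zettl \cite{Zettl} and used as a black box. There is therefore nothing in the paper to compare your argument against.

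That said, what you have sketched is precisely the standard Naimark--Zettl argument that underlies the cited reference, so your approach is the ``right'' one. Two small remarks. First, your bracket $[u,v]=u^{[1]}\overline{v}-u\,\overline{v^{[1]}}$ is the negative of the paper's convention in Lemma~\ref{lm_Lagrange}; harmless, but worth reconciling. Second, the sentence ``the density from the previous step implies that this orthogonal complement consists of classical solutions'' is misleading as written: density of $\mathrm{Dom}(\mathrm{L}_{0\Delta})$ in $L^2(\Delta)$ says nothing about $\mathrm{Ran}(\mathrm{L}_{0\Delta})^\perp$. What you actually need (and what your construction in the density paragraph already contains) is that every $f\in L^2(\Delta)$ satisfying the two Wronskian-moment conditions lies in $\mathrm{Ran}(\mathrm{L}_{0\Delta})$; since those moment conditions are exactly orthogonality to the two-dimensional solution space of $l[\cdot]=0$, this gives $\mathrm{Ran}(\mathrm{L}_{0\Delta})^\perp\subset\ker\mathrm{L}_\Delta$. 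Stating it this way removes the apparent non sequitur. With that clarification, and a line justifying continuity of the boundary functionals in the graph norm (e.g.\ via the Gronwall-type estimate for the first-order system $Y'=AY+F$ with $A\in L^1$), your outline is complete.
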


Let us now pass from the finite interval $\Delta$ to the entire axis. 

\begin{Theorem}\label{th_A}
The following statements hold for operators $\mathrm{L}$ and $\mathrm{L}_{00}$.
\begin{itemize}
\item [$1^{0}$.] Operator $\mathrm{L}_{00}$ has a domain which is dense in the Hilbert space $L^{2}(\mathbb{R})$.
\item [$2^{0}$.] The equality holds: 
\[ (\mathrm{L}_{00})^*=\mathrm{L}. \]
\item [$3^{0}$.] Operator $\mathrm{L}$ is closed and preminimal operator $\mathrm{L}_{00}$ is closable.
\item [$4^{0}$.] The domain of the minimal operator $\mathrm{L}_{0}$ may be represented as 
 \[ \mathrm{Dom}(\mathrm{L}_{0}) =\left\{u\in \mathrm{Dom}(\mathrm{L}) \left|\,[u,v]_{-\infty}^{\infty}=0\quad \forall
	v\in  \mathrm{Dom}(\mathrm{L})\right.\right\},\]
where 
\begin{align*}
[u,v](t)\equiv [u,v] & :=u(t)\overline{v^{[1]}(t)}-u^{[1]}(t)\overline{v(t)}, \\
[u,v]_{a}^{b} & :=[u,v](b)-[u,v](a),\quad -\infty\leq a\leq b\leq \infty.
\end{align*}
\item [$5^{0}$.] Operator $\mathrm{L}_{0}$ is symmetric.
\item [$6^{0}$.] Let $p,1/p\in L^\infty(\Delta)$. Then
\begin{align*}
\mathtt{i)}\hspace{5pt} Q, r\in L^2(\Delta), \qquad
\mathtt{ii)}\hspace{5pt} u\in \mathrm{Dom}(\mathrm{L}) \Rightarrow u|_\Delta\in W_2^1(\Delta).
\end{align*}
In particular, if  $p,1/p\in L_{loc}^\infty(\mathbb{R})$, then 
\begin{align*}
\mathtt{i)}\hspace{5pt} Q, r\in L_{loc}^2(\mathbb{R}), \qquad
\mathtt{ii)}\hspace{5pt} \mathrm{Dom}(\mathrm{L})\subset W_{2,loc}^1(\mathbb{R}).
\end{align*}
\item [$7^{0}$.] Let $p,1/p\in L_{loc}^\infty(\mathbb{R})$. Then
\begin{equation*}
(\mathrm{L}_{00}u,u)_{L^2(\mathbb{R})}=\int_{\mathbb{R}}p|u'|^{2}d\,x-\int_{\mathbb{R}}Qd\,|u|^{2}+\int_{\mathbb{R}}s|u|^
{2}d\,x,\qquad u\in \mathrm{Dom}(\mathrm{L}_{00}).
\end{equation*}
\end{itemize}
\end{Theorem}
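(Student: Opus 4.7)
My plan is to deduce all seven assertions from their finite-interval analogues in Lemma~\ref{lm_Zettl} by a localization argument. The key bridge is the pair of observations that (a) any $u\in\mathrm{Dom}(\mathrm{L}_{0\Delta})$, since both $u$ and $u^{[1]}$ vanish at the endpoints of $\Delta$, extends by zero to an element of $\mathrm{Dom}(\mathrm{L}_{00})$, and (b) conversely any $u\in\mathrm{Dom}(\mathrm{L}_{00})$ with $\mathrm{supp}\,u\Subset\Delta$ restricts to an element of $\mathrm{Dom}(\mathrm{L}_{0\Delta})$.

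For $1^{0}$, given $f\in L^2(\mathbb{R})$ and $\varepsilon>0$, I first truncate $f$ to a sufficiently large interval $\Delta_N$, then approximate $f|_{\Delta_N}$ in $L^2(\Delta_N)$ by an element of $\mathrm{Dom}(\mathrm{L}_{0\Delta_N})$, which is dense by Lemma~\ref{lm_Zettl}, and extend this approximant by zero. For $2^{0}$, the inclusion $\mathrm{L}\subset(\mathrm{L}_{00})^{*}$ follows from Green's identity on a finite interval containing $\mathrm{supp}\,u$, where all boundary terms vanish. For the reverse inclusion, given $v\in\mathrm{Dom}((\mathrm{L}_{00})^{*})$ with $(\mathrm{L}_{00})^{*}v=g$, testing against arbitrary $u\in\mathrm{Dom}(\mathrm{L}_{0\Delta})$ extended by zero identifies $v|_\Delta\in\mathrm{Dom}((\mathrm{L}_{0\Delta})^{*})=\mathrm{Dom}(\mathrm{L}_\Delta)$ via Lemma~\ref{lm_Zettl}, whence $v,v^{[1]}\in\mathrm{AC}(\bar{\Delta})$ and $l[v]=g$ on $\Delta$; exhausting $\mathbb{R}$ by such intervals yields $v\in\mathrm{Dom}(\mathrm{L})$ with $\mathrm{L}v=g$. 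Part $3^{0}$ is then automatic: $\mathrm{L}=(\mathrm{L}_{00})^{*}$ is closed as an adjoint, and $\mathrm{L}_{00}$ is closable because its adjoint $\mathrm{L}$ contains $\mathrm{L}_{00}$ and is hence densely defined by $1^{0}$.

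For $4^{0}$ I use $\mathrm{L}_0=\overline{\mathrm{L}_{00}}=(\mathrm{L}_{00})^{**}=\mathrm{L}^{*}$; the inclusion $\mathrm{L}_0\subset\mathrm{L}$ (since $\mathrm{L}$ is closed) lets me characterize $u\in\mathrm{Dom}(\mathrm{L}_0)$ as $u\in\mathrm{Dom}(\mathrm{L})$ together with $(\mathrm{L}v,u)=(v,\mathrm{L}u)$ for all $v\in\mathrm{Dom}(\mathrm{L})$. The Lagrange identity on $(a,b)$ expresses this difference as $[u,v]_a^b$; since $\mathrm{L}u,\mathrm{L}v,u,v\in L^2(\mathbb{R})$, both sides have well-defined limits as $a\to-\infty$, $b\to\infty$, producing the required criterion $[u,v]_{-\infty}^{\infty}=0$. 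Part $5^{0}$ follows at once: for $u,v\in\mathrm{Dom}(\mathrm{L}_0)$ the boundary form vanishes, giving $(\mathrm{L}_0 u,v)=(u,\mathrm{L}_0 v)$. For $6^{0}$, when $p,1/p\in L^\infty(\Delta)$ the factor $\sqrt{|p|}$ is bounded, so $Q=\sqrt{|p|}\cdot(Q/\sqrt{|p|})\in L^2(\Delta)$, and similarly $r\in L^2(\Delta)$; for $u\in\mathrm{Dom}(\mathrm{L})$ the identity $u'=p^{-1}(u^{[1]}+(Q+ir)u)$ together with local boundedness of $u$, $u^{[1]}$, $1/p$ and the local $L^2$ property of $Q+ir$ gives $u'\in L^2_{loc}(\mathbb{R})$. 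Finally, for $7^{0}$ I write $(\mathrm{L}_{00}u,u)=-\int\bar u\,u^{[2]}\,dx$ for compactly supported $u$ (which by $6^{0}$ lies in $W^1_{2,loc}$), expand $u^{[2]}$ according to \eqref{quasideriv}, integrate by parts with vanishing boundary contributions, and note that the two copies of $(Q^2+r^2)/p$ cancel; the distributional identity $q=Q'+s$ then converts the contribution $\int q|u|^2\,dx$ into $-\int Q\,(|u|^2)'\,dx+\int s|u|^2\,dx$, written compactly as the Stieltjes integral $-\int Q\,d|u|^2$.

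The principal technical obstacle is the reverse inclusion in $2^{0}$: extracting the concrete quasi-differential structure---absolute continuity of $v$ and $v^{[1]}$ together with membership $l[v]\in L^2(\mathbb{R})$---from the purely Hilbert-space datum $v\in\mathrm{Dom}((\mathrm{L}_{00})^{*})$. This is exactly where Lemma~\ref{lm_Zettl} combined with the extension-by-zero device reduces the question to finite intervals; once this step is complete, the remaining items follow by routine calculations.
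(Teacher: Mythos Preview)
Your proposal is correct and follows essentially the same localization-to-finite-intervals strategy as the paper, invoking Lemma~\ref{lm_Zettl} and the Lagrange identity (Lemma~\ref{lm_Lagrange}) at the same junctures. The only cosmetic differences are that the paper proves $1^{0}$ via an orthogonality argument rather than your constructive approximation, and in $7^{0}$ the term $\int q|u|^{2}$ never actually appears in the computation---expanding $u^{[2]}$ and integrating by parts produces the $Q$ and $s$ contributions directly from \eqref{quasideriv}, so no appeal to $q=Q'+s$ is needed at that stage.
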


Before proving Theorem \ref{th_A}, we prove several statements we will need. 

\begin{Lemma}\label{lm_Lagrange}
For arbitrary functions $u,v\in \mathrm{Dom}(\mathrm{L})$ on a finite interval $[a,b]$ the relations hold:
\begin{equation}\label{Lagrange}
\int_{a}^{b}l[u]\overline{v}d\,x-\int_{a}^{b}u\overline{l[v]}d\,x=[u,v]_{a}^{b}.
\end{equation}
\end{Lemma}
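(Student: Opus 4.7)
The plan is to establish the pointwise identity
\[
l[u]\,\overline{v}-u\,\overline{l[v]}=\frac{d}{dx}[u,v]\qquad\text{a.e. on }[a,b],
\]
and then to integrate both sides. Since $u,u^{[1]},v,v^{[1]}$ lie in $\mathrm{AC}_{loc}(\mathbb{R})$ (and hence are absolutely continuous on $[a,b]$), the product $[u,v]=u\,\overline{v^{[1]}}-u^{[1]}\,\overline{v}$ is itself absolutely continuous, so the fundamental theorem of calculus immediately yields $\int_a^b\frac{d}{dx}[u,v]\,dx=[u,v]_a^b$, which is the desired identity \eqref{Lagrange}.

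To obtain the pointwise identity, I would substitute the definition
\[
u^{[2]}=(u^{[1]})'+\frac{Q-ir}{p}\,u^{[1]}+\left(\frac{Q^2+r^2}{p}-s\right)u
\]
into $l[u]\overline v=-u^{[2]}\overline v$, and, using that $p,Q,r,s$ are real-valued (so that $\overline{Q-ir}=Q+ir$), the analogous expression for $\overline{v^{[2]}}$, namely
\[
\overline{v^{[2]}}=\bigl(\overline{v^{[1]}}\bigr)'+\frac{Q+ir}{p}\,\overline{v^{[1]}}+\left(\frac{Q^2+r^2}{p}-s\right)\overline v.
\]
Subtracting, the real "potential" term $\left(\tfrac{Q^2+r^2}{p}-s\right)u\overline v$ cancels, leaving
\[
u\overline{v^{[2]}}-u^{[2]}\overline v
=\bigl[u(\overline{v^{[1]}})'-(u^{[1]})'\overline v\bigr]
+\left[\frac{Q+ir}{p}\,u\,\overline{v^{[1]}}-\frac{Q-ir}{p}\,u^{[1]}\,\overline v\right].
\]

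The algebraic bracket is handled by inverting the first-order quasi-derivative relation: from $u^{[1]}=pu'-(Q+ir)u$ one gets $\tfrac{Q+ir}{p}\,u=u'-\tfrac{1}{p}u^{[1]}$, and conjugating $v^{[1]}=pv'-(Q+ir)v$ gives $\tfrac{Q-ir}{p}\,\overline v=\overline{v'}-\tfrac{1}{p}\overline{v^{[1]}}$. Plugging these in, the two $\tfrac{1}{p}u^{[1]}\overline{v^{[1]}}$ contributions cancel, and the algebraic bracket reduces to $u'\,\overline{v^{[1]}}-u^{[1]}\,\overline{v'}$. Combining with the derivative bracket gives
\[
u\overline{v^{[2]}}-u^{[2]}\overline v
=\frac{d}{dx}\bigl(u\,\overline{v^{[1]}}\bigr)-\frac{d}{dx}\bigl(u^{[1]}\,\overline v\bigr)
=\frac{d}{dx}[u,v],
\]
which is the sought pointwise identity since $l[u]\overline v-u\overline{l[v]}=u\overline{v^{[2]}}-u^{[2]}\overline v$.

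There is no real obstacle here; the only points requiring attention are bookkeeping of the complex conjugates (which relies essentially on $p,Q,r,s$ being real-valued), and the verification that each manipulated term is locally integrable — this is guaranteed by the assumptions \eqref{cond_GMM} together with $u,u^{[1]},v,v^{[1]}\in\mathrm{AC}(\overline\Delta)$, which ensure that all products and quotients appearing in the computation belong to $L^1([a,b])$, justifying the termwise use of the fundamental theorem of calculus.
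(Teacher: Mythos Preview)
Your proof is correct and follows essentially the same approach as the paper: a direct computation substituting the definitions of the quasi-derivatives, cancelling the real potential term $\left(\tfrac{Q^2+r^2}{p}-s\right)u\overline v$, and then using the relation $u'=\tfrac{1}{p}u^{[1]}+\tfrac{Q+ir}{p}u$ (and its conjugate for $v$) to make the $\tfrac{1}{p}u^{[1]}\overline{v^{[1]}}$ terms cancel. The only cosmetic difference is that you establish the pointwise identity $l[u]\overline v-u\overline{l[v]}=\tfrac{d}{dx}[u,v]$ first and then integrate, whereas the paper writes out the integrals and performs one integration by parts before simplifying the integrand; the algebra is identical.
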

\begin{proof}
The statement is proved by a direct computation applying integration by parts:
\begin{align*}
\int_{a}^{b}l[u]\overline{v}d\,x-&\int_{a}^{b}u\overline{l[v]}d\,x = \\
& =\int_{a}^{b}\left(-(u^{[1]})'\overline{v}-\frac{Q-ir}{p}u^{[1]}\overline{v}-\frac{Q^2+r^2}{p}u\overline{v}+s u\overline{v}\right)d\,x+ \\
& \qquad +\int_{a}^{b}\left(u(\overline{v^{[1]}})'+\frac{Q+ir}{p}u\overline{v^{[1]}}+\frac{Q^2+r^2}{p}u\overline{v}-s
u\overline{v}\right)d\,x = \\
&  =\left.\left(u\overline{v^{[1]}}-u^{[1]}\overline{v}\right)\vphantom{\int}\right|_{a}^{b} +\int_{a}^{b}\left(u^{[1]}\overline{v}'-\frac{Q-ir}{p}u^{[1]}\overline{v}-u'\overline{v^{[1]}}
+\frac{Q+ir}{p}u\overline{v}^{[1]}\right)d\,x=\\
&  =[u,v]_{a}^{b} +\int_{a}^{b}\left(\frac{1}{p}u^{[1]}\overline{v^{[1]}}-\frac{1}{p}u^{[1]}\overline{v^{[1]}}\right)d\,x
=[u,v]_{a}^{b}.
\end{align*}
Lemma is proved.
\end{proof}

\begin{Lemma}\label{lm_uv_form_lim}
For arbitrary functions $u,v\in \mathrm{Dom}(\mathrm{L})$ the following limits exist and are finite:
\begin{equation*}\label{uv_form_lim}
[u,v](-\infty):=\lim_{t\rightarrow-\infty}[u,v](t),\qquad [u,v](\infty):=\lim_{t\rightarrow\infty}[u,v](t).
\end{equation*}
\end{Lemma}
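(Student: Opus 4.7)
The plan is to derive the existence of both limits directly from the Lagrange identity \eqref{Lagrange} established in Lemma \ref{lm_Lagrange}, combined with the square-integrability built into the definition of $\mathrm{Dom}(\mathrm{L})$.

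First I would fix an arbitrary reference point $c \in \mathbb{R}$ and, for any $b > c$, apply \eqref{Lagrange} to the finite interval $[c,b]$. This yields
\[
[u,v](b) \;=\; [u,v](c) \;+\; \int_{c}^{b} l[u]\,\overline{v}\, dx \;-\; \int_{c}^{b} u\,\overline{l[v]}\, dx.
\]
Since $u, v \in \mathrm{Dom}(\mathrm{L})$ we have $u, v, l[u], l[v] \in L^2(\mathbb{R})$, so by the Cauchy--Schwarz inequality both integrands are in $L^1([c,\infty))$. Therefore the right-hand side converges as $b \to \infty$, and the limit is finite; this delivers $[u,v](\infty)$. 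The same argument on intervals $[a,c]$ with $a \to -\infty$ gives the existence and finiteness of $[u,v](-\infty)$.

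I do not foresee any real obstacle here: the only ingredients are the Lagrange identity (already proved) and the $L^2$-membership of $u, v, l[u], l[v]$, which is part of the definition of the maximal operator. The proof is essentially a one-line computation once Lemma \ref{lm_Lagrange} is in hand.
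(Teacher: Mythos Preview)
Your proposal is correct and takes essentially the same approach as the paper: fix one endpoint in the Lagrange identity \eqref{Lagrange}, use that $u,v,l[u],l[v]\in L^2(\mathbb{R})$ (hence the integrands are integrable over half-lines by Cauchy--Schwarz), and let the other endpoint tend to $\pm\infty$. The paper's write-up is slightly terser but identical in substance.
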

\begin{proof}
Let us fix the number $b$ in the equality \eqref{Lagrange} and pass in it to the limit as $a\rightarrow-\infty$. 
Since, by the assumptions of the Lemma, the functions  $u,v,l[u],l[v]\in L^{2}(\mathbb{R})$, 
then limit $[u,v](-\infty)$ exists and is finite.
The existence of the limit $[u,v](\infty)$ is proved similarly.
\end{proof}

\begin{proof}[Proof of Theorem~\ref{th_A}]
The proofs of assertions $1^{0}$ -- $4 ^{0}$ are similar to the proofs for the case of the semiaxis \cite{Zettl,Naimark}. 
They rely on similar properties of operators on a finite interval.
For the convenience of the reader, we present these proofs in full. 

$1^{0}$. Let the function $h\in L^2(\mathbb{R})$ be orthogonal to $\mathrm{Dom}(\mathrm{L}_{00})$. 
Let us show that $h\equiv0$.

Let $\Delta$ be an arbitrary fixed finite interval. 

Let us define by the expression $l$ the quasi-differential operators $\mathrm{L}_{0\Delta}$ and $\mathrm{L}_\Delta$. 
The space $L^{2}(\Delta)$ may be embedded into $L^{2}(\mathbb{R})$, 
assuming that outside the interval $\overline{\Delta}$ function $u\in L^{2}(\Delta)$ is equal to zero.  
Thus, domain $\mathrm{Dom}(\mathrm{L}_{0\Delta})$ of the operator $\mathrm{L}_{0\Delta}$ 
becomes a part of $\mathrm{Dom}(\mathrm{L})$.
Moreover, the function $u\in \mathrm{Dom}(\mathrm{L}_{0\Delta})$ extended in such a way belongs to $\mathrm{Dom}(\mathrm{L}_{00})$.

Thus, the function $h$ will be orthogonal to $\mathrm{Dom}(\mathrm{L}_{0\Delta})$. 
Due to Lemma~\ref{lm_Zettl} the domain $\mathrm{Dom}(\mathrm{L}_{0\Delta})$ is dense in the space $L^2(\Delta)$. 
Therefore $h|_{\overline{\Delta}} = 0$ almost everywhere. 

As the interval $\Delta\subset \mathbb{R}$ is arbitrary, this implies that $h=0$ almost everywhere on $\mathbb{R}$.

Assertion $1^{0}$ is proved. 

$2^{0}$. Due to assertion $1^{0}$ for the operator $\mathrm{L}_{00}$ the adjoint operator $(\mathrm{L}_{00})^*$ exists.

Let $u\in \mathrm{Dom}(\mathrm{L}_{00})$, $v\in \mathrm{Dom}(\mathrm{L})$. 
Then applying the Lagrange identity from Lemma~\ref{lm_Lagrange} we get:
\[ (\mathrm{L}_{00}u,v)_{L^2(\mathbb{R})}=(u,\mathrm{L}v)_{L^2(\mathbb{R})}.\]
Hence $\mathrm{L}\subset (\mathrm{L}_{00})^*$. 
Let us prove the reverse inclusion. 

Let $v$ be an arbitrary element from $\mathrm{Dom}\left((\mathrm{L}_{00})^*\right)$  
and let $\Delta=(\alpha,\beta)$ be a fixed finite interval in $\mathbb{R}$. 
Then 
\[\left((\mathrm{L}_{00})^*v,u\right)_{L^2(\mathbb{R})}=\left(v,\mathrm{L}_{00}u\right)_{L^2(\mathbb{R})}\qquad \forall u\in \mathrm{Dom}(\mathrm{L}_{0\Delta}). \]
Since $u|_{\mathbb{R}\setminus\overline{\Delta}} = 0$, 
then inner products may be expressed as integrals over the interval $\overline{\Delta}$, 
i. e. they are inner products in $L^{2}(\Delta)$ and 
\begin{equation}\label{eq_Prp24}
\left(((\mathrm{L}_{00})^*v)_\Delta,u\right)_{L^2(\Delta)}=\left(v_\Delta,\mathrm{L}_{0\Delta}u\right)_{L^2(\Delta)}\qquad
\forall u\in \mathrm{Dom}(\mathrm{L}_{0\Delta}),
\end{equation}
where $((\mathrm{L}_{00})^*v)_\Delta$, $v_\Delta$ are the restrictions of corresponding functions onto interval~$\overline{\Delta}$.
Equality \eqref{eq_Prp24} due to Lemma~\ref{lm_Zettl} implies that 
\[   ((\mathrm{L}_{00})^*v)_\Delta=\mathrm{L}_\Delta v_\Delta=\left(l[v]\right)_\Delta. \]
Since the interval $\Delta$ is arbitrary it follows that 
\begin{equation*}
v\in \mathrm{Dom}(\mathrm{L}), \qquad (\mathrm{L}_{00})^*v=l[v]=\mathrm{L}v,
\end{equation*}
which we needed to prove.

Thus assertion $2^{0}$ is proved.

$3^{0}$. Immediately follows from assertion $2^{0}$.

$4^{0}$. Since the operator $\mathrm{L}_0$ is closed and $ (\mathrm{L}_0)^*=\mathrm{L}$, 
then obviously $\mathrm{Dom}\left(\mathrm{L}_0\right)$ consists of those and only those functions 
$u\in \mathrm{Dom}\left(\mathrm{L}\right)$ that satisfy relation
\begin{equation*}
(u,\mathrm{L}v)_{L^{2}(\mathbb{R})}=(\mathrm{L}u,v)_{L^{2}(\mathbb{R})}\qquad \forall v\in \mathrm{Dom}\left(\mathrm{L}\right),
\end{equation*}
which due to Lagrange identity is equivalent to assertion $4^{0}$.

Assertion $4^{0}$ is proved.

$5^{0}$. Lemma~\ref{lm_Lagrange} implies that operator $\mathrm{L}_{00}$ is symmetric. 
Therefore operator $\mathrm{L}_0$ being its closure is also symmetric.

$6^{0}$. Let the conditions of the assertion be satisfied. 
They immediately imply that $\sqrt{|p|} \in L_{loc}^\infty(\mathbb{R})$. 
Due to assumptions \eqref{cond_GMM} functions $\frac{Q}{\sqrt{|p|}}, \frac{r}{\sqrt{|p|}}\in L_{loc}^2(\mathbb{R})$.
Therefore, if $p \in L^\infty(\Delta)$ then
\begin{equation*}
\sqrt{|p|}\cdot \frac{Q}{\sqrt{|p|}}=Q\in L^2(\Delta),\quad \sqrt{|p|}\cdot \frac{r}{\sqrt{|p|}}=r\in L^2(\Delta).
\end{equation*}

Let, further, $u\in \mathrm{Dom}(\mathrm{L})$. 
Definition of the domain $\mathrm{Dom}(\mathrm{L})$ implies that 
\[  p\left(u'- \frac{Q+ir}{p}u\right) = pu' - (Q+ir)u \in AC_{loc}(\mathbb{R})\subset L_{loc}^2(\mathbb{R}). \]
Therefore, taking into account that $\frac{1}{p}\in L^\infty(\Delta)$ we get:
\begin{equation*}
\frac{1}{p}\cdot pu'=u'\in L^2(\Delta),\quad\text{ i. e. }\quad u_\Delta\in W_2^1(\Delta).
\end{equation*}

Assertion $6^{0}$ is proved.

$7^{0}$. Considering that, due to our assumptions 
$Q, r\in L_{loc}^2(\mathbb{R})$ and $\mathrm{Dom}(\mathrm{L}_{00})\subset W_{2, comp}^1(\mathbb{R})$, 
for $u\in\mathrm{Dom}(\mathrm{L}_{00})$ we have:
\begin{align*}
(\mathrm{L}&_{00}u,u)_{L^2(\mathbb{R})} 
=\int_{\mathbb{R}}\left(-(u^{[1]})'\overline{u}-\frac{Q-ir}{p}u^{[1]}\overline{u}-\frac{Q^2+r^2}{p}|u|^2+s|u|^2\right)d\,x=\\
& =\int_{\mathbb{R}}\left((u^{[1]}\overline{u}'-\frac{Q-ir}{p}u^{[1]}\overline{u}-\frac{Q^2+r^2}{p}|u|^2+s|u|^2\right)d\,x=\\
&=\int_{\mathbb{R}}\left((pu'-(Q+ir)u)\overline{u}'-\frac{Q-ir}{p}(pu'-(Q+ir)u)\overline{u}-\frac{Q^2+r^2}{p}|u|^2+s|u|^2\right)d\,x =\\
& =\int_{\mathbb{R}}\left(p|u'|^2-Q(u'\overline{u}+u\overline{u}')+s|u|^2\right)d\,x
=\int_{\mathbb{R}}p|u'|^2 d\,x-\int_{\mathbb{R}}Q d|u|^2+\int_{\mathbb{R}}s|u|^2 d\,x.
\end{align*}

Assertion $7^{0}$ and therefore Theorem~\ref{th_A} are proved.
\end{proof}

\section{Proofs of Theorems \ref{th_B} and \ref{th_C}}\label{sec:Proofs}
To prove Theorems \ref{th_B} and \ref{th_C} we need two more auxiliary statements 
regarding local regularity of functions from domains of minimal and maximal operators. 

\begin{Lemma}\label{lm_phi_u}
Let the initial assumptions about the coefficients~\eqref{cond_GMM} be satisfied 
and suppose $p>0$ and $p\in W_{2,loc}^1(\mathbb{R})$. 
Then for an arbitrary function $\varphi\in W_{2,comp}^{2}(\mathbb{R})$ and any function $u\in \mathrm{Dom}(\mathrm{L})$ 
we have $\varphi u\in \mathrm{Dom}(\mathrm{L}_{00})$. 
\end{Lemma}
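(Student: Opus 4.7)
The plan is to verify directly that $v := \varphi u$ satisfies each condition defining $\mathrm{Dom}(\mathrm{L}_{00})$: that $v$ has compact support (immediate from $\mathrm{supp}\,\varphi \Subset \mathbb{R}$), that $v \in L^{2}(\mathbb{R})$ (since $\varphi$ is bounded), that $v$ and $v^{[1]}$ lie in $\mathrm{AC}_{loc}(\mathbb{R})$, and that $l[v] \in L^{2}(\mathbb{R})$.

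First I would collect the background regularity. The one-dimensional Sobolev embedding $W_{2,loc}^{1}(\mathbb{R}) \hookrightarrow C(\mathbb{R})$ together with $p > 0$ yields $p, 1/p \in L_{loc}^{\infty}(\mathbb{R})$. Assertion $6^{0}$ of Theorem~\ref{th_A} then furnishes $Q, r \in L_{loc}^{2}(\mathbb{R})$ and $u \in W_{2,loc}^{1}(\mathbb{R})$. The hypothesis $\varphi \in W_{2,comp}^{2}(\mathbb{R})$ gives $\varphi \in C^{1}(\mathbb{R})$ with $\varphi''\in L_{comp}^{2}(\mathbb{R})$, so in particular $\varphi$ and $\varphi'$ are bounded and of compact support.

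Next, I would compute the quasi-derivatives of $v$ directly. Substituting $v' = \varphi' u + \varphi u'$ into the definition of $v^{[1]}$ and using $p u' = u^{[1]} + (Q+ir) u$ to eliminate $u'$ gives
\[
v^{[1]} = p \varphi' u + \varphi u^{[1]},
\]
which lies in $\mathrm{AC}_{loc}(\mathbb{R})$ as a sum of products of $\mathrm{AC}_{loc}$ functions. Differentiating $v^{[1]}$, adding the lower-order terms of $v^{[2]}$ prescribed by \eqref{quasideriv}, again invoking $p u' = u^{[1]} + (Q+ir) u$, and collecting, the $1/p$ contributions cancel pairwise and the result simplifies to
\[
l[\varphi u] \;=\; \varphi\, l[u] \;-\; (p \varphi')' u \;-\; 2 \varphi' u^{[1]} \;-\; 2 Q \varphi' u .
\]

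Finally, I would verify that each summand on the right lies in $L^{2}(\mathbb{R})$. Every term is supported in the compact set $K := \mathrm{supp}\,\varphi$; on $K$ the functions $\varphi, \varphi', p$ are bounded and the functions $u, u^{[1]}$ are continuous (hence bounded). Therefore $\varphi\, l[u] \in L^{2}$ because $l[u] \in L^{2}$; the product $\varphi' u^{[1]}$ is bounded with compact support; $Q \varphi' u \in L^{2}$ because $Q \in L_{loc}^{2}$; and $(p \varphi')' u = (p' \varphi' + p \varphi'') u \in L^{2}$ because $p', \varphi'' \in L_{loc}^{2}$ and the remaining factors are locally bounded. The main obstacle is the algebraic identity for $l[\varphi u]$: unless the two substitutions $p u' = u^{[1]} + (Q + ir) u$ are arranged so that all $1/p$ terms cancel and no derivative of $u$ beyond $u^{[1]}$ survives, the remainder would not in general belong to $L^{2}(\mathbb{R})$ under the singular assumptions \eqref{cond_GMM}; once the identity is in hand, the $L^{2}$ estimates are routine.
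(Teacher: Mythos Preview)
Your proof is correct and follows essentially the same route as the paper: you invoke the Sobolev embedding to get $p,1/p\in L^\infty_{loc}$, apply assertion $6^{0}$ of Theorem~\ref{th_A} to obtain $Q,r\in L^2_{loc}$ and $u\in W^{1}_{2,loc}$, then compute $(\varphi u)^{[1]}=p\varphi' u+\varphi u^{[1]}$ and an explicit formula for $l[\varphi u]$. The only cosmetic difference is that the paper records the commutator as $l[\varphi u]=\varphi l[u]-\varphi' u^{[1]}-p'\varphi' u-p\varphi'' u-p\varphi' u'-(Q-ir)\varphi' u$, leaving the term $p\varphi' u'$ unsubstituted, whereas you replace $pu'$ by $u^{[1]}+(Q+ir)u$ once more to reach the equivalent form $l[\varphi u]=\varphi l[u]-(p\varphi')'u-2\varphi' u^{[1]}-2Q\varphi' u$; the $L^{2}$ verification is the same in either presentation.
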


\begin{proof}
Conditions $p\in W_{2, loc}^1(\mathbb{R})$ and $p>0$ imply that continuous on $\mathbb{R}$ function $p$ 
is locally separated from zero. 
Therefore function $\frac{1}{p}\in C(\mathbb{R})$ and is locally bounded on $\mathbb{R}$. 
Therefore, according to assertion~$6^{0}$ of Theorem~\ref{th_A} 
\begin{equation}\label{eq_Prp28}
Q, r\in L_{loc}^2(\mathbb{R}), \qquad \mathrm{Dom}(\mathrm{L}) \subset W_{2,loc}^1(\mathbb{R}).
\end{equation}
	
Due to \eqref{eq_Prp28} we have:
\begin{align*}
\mathtt{(i)}\hspace{5pt} & \varphi u\in \mathrm{AC}_{comp}(\mathbb{R}),\hspace{345pt} \\
\mathtt{(ii)}\hspace{5pt} & (\varphi u)^{[1]}=\left(p(\varphi u)'-(Q+ir)\varphi u\right)=p\varphi' u+ \varphi u^{[1]}\in \mathrm{AC}_{comp}(\mathbb{R}), \\
\mathtt{(iii)}\hspace{5pt} & l[\varphi u]=\varphi l[u]-\varphi' u^{[1]} - p'\varphi' u - p\varphi'' u - p\varphi' u' - (Q-ir) \varphi' u \in L_{comp}^{2}(\mathbb{R}), \\
\mathtt{(iv)}\hspace{5pt} & \mathrm{supp}\,\varphi u \text{ is a compact set}.
\end{align*}
Thus $\varphi u\in \mathrm{Dom}(\mathrm{L}_{00})$.

Therefore Lemma is proved.
\end{proof}

\begin{Lemma}\label{lm_phi_u_int}
Let the assumptions~\eqref{cond_GMM} about the coefficients of $l$ be satisfied, 
suppose numbers $a'<a<b<b'$ and positive functions $p|_{[a',a]} \in W_2^1([a',a])$, $p|_{[b,b']} \in W_2^1([b,b'])$, 
and suppose function $\varphi\in C^{2}(\mathbb{R})$ is such that 
\begin{align*}
\mathtt{(i)}\hspace{5pt} & 0 \leq\varphi(x) \leq 1,\quad x\in \mathbb{R},\hspace{295pt} \\
\mathtt{(ii)}\hspace{5pt} & \varphi(x) = \begin{cases}
1, & x\in [a,b];\\
0, &x\in (\infty, a')\bigcup (b',\infty).
\end{cases}
\end{align*}
Then for every function $u\in \mathrm{Dom}(\mathrm{L})$ we have $\varphi u\in \mathrm{Dom}(\mathrm{L}_{00})$. 
\end{Lemma}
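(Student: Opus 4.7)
The plan is to model the argument on that of Lemma~\ref{lm_phi_u}, observing that every new term introduced by multiplication by the cutoff $\varphi$ carries at least one derivative of $\varphi$ and is therefore supported in the union of transition intervals $[a',a]\cup[b,b']$. These are exactly the intervals where the additional hypothesis $p\in W_2^1$ is assumed, so the argument of Lemma~\ref{lm_phi_u} can be localized there, while on the rest of the line $\varphi$ is locally constant and contributes nothing new.

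First I would note that $\mathrm{supp}(\varphi u)\subset[a',b']$ is compact. On each of the intervals $[a',a]$ and $[b,b']$ the function $p$ is continuous, strictly positive and defined on a compact set, hence $p,1/p\in L^\infty$ there. Assertion~$6^0$ of Theorem~\ref{th_A} then yields $Q,r\in L^2$ on $[a',a]\cup[b,b']$ and $u\in W_2^1$ on the same set; in particular $u$ and $u^{[1]}$ are bounded and $u'\in L^2$ on these two intervals.

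Next I would verify the three defining conditions for $\varphi u\in\mathrm{Dom}(\mathrm{L})$. Absolute continuity is immediate since $\varphi\in C^2$ and $u\in\mathrm{AC}_{loc}(\mathbb{R})$. For the first quasi-derivative I would use the identity $(\varphi u)^{[1]}=p\varphi'u+\varphi u^{[1]}$: the second summand is in $\mathrm{AC}_{loc}$, while the first is supported on $[a',a]\cup[b,b']$, where $p\in W_2^1\subset\mathrm{AC}$ and $u\in W_2^1\subset\mathrm{AC}$; since $\varphi'$ vanishes at each of the four endpoints $a',a,b,b'$, the function $p\varphi' u$ extends continuously (in fact absolutely continuously) by zero to all of $\mathbb{R}$. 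Finally, for $l[\varphi u]\in L^2(\mathbb{R})$ I would expand, as in Lemma~\ref{lm_phi_u},
\[ l[\varphi u]=\varphi\, l[u]-p'\varphi'u-p\varphi''u-p\varphi'u'-\varphi'u^{[1]}-(Q-ir)\varphi'u, \]
note that $\varphi\, l[u]\in L^2$ because $l[u]\in L^2$ and $\varphi\in L^\infty$, and observe that every remaining summand is supported on $[a',a]\cup[b,b']$ and is a product of bounded factors with $L^2$ factors (using $p'\in L^2$, $Q,r\in L^2$, $u'\in L^2$ on those intervals together with $p,\varphi',\varphi'',u,u^{[1]}\in L^\infty$ there). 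Each such term thus belongs to $L^2(\mathbb{R})$, and together with the compact support this gives $\varphi u\in\mathrm{Dom}(\mathrm{L}_{00})$.

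The main obstacle I anticipate is the rigorous interpretation of the $p'\varphi'u$ term (and the appearance of $p'$ in the expansion of $l[\varphi u]$ generally), because $p$ is assumed to be in $W_2^1$ only on the two transition intervals and on nothing else, so $p'$ has no global meaning. The resolution is precisely that $\varphi'$ is supported inside those intervals, so $p'\varphi'$ is well defined as an $L^2$ function after extension by zero, and the computation of $l[\varphi u]$ can be performed on each transition interval separately and then glued to the trivial contributions coming from $[a,b]$ (where $\varphi\equiv 1$, so $l[\varphi u]=l[u]$) and from outside $[a',b']$ (where $\varphi u\equiv 0$).
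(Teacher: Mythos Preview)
Your proposal is correct and follows essentially the same route as the paper: both arguments localize to the transition intervals $[a',a]\cup[b,b']$, invoke assertion~$6^{0}$ of Theorem~\ref{th_A} to get $p,1/p\in L^\infty$, $Q,r\in L^2$, $u\in W_2^1$ there, and then verify the same identities $(\varphi u)^{[1]}=p\varphi'u+\varphi u^{[1]}$ and $l[\varphi u]=\varphi l[u]-\varphi' u^{[1]}-p'\varphi'u-p\varphi''u-p\varphi'u'-(Q-ir)\varphi'u$ term by term. Your discussion of why $p'\varphi'$ is well defined despite $p'$ having no global meaning is in fact more careful than the paper, which simply writes the expansion and asserts $L^2_{comp}$ membership.
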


\begin{proof}
Assumptions of Lemma imply that function $p$ is continuous and separated from zero on the set $[a',a]\bigcup[b,b']$. 
Therefore function $\frac{1}{p}$ is bounded on the compact set $[a',a]\bigcup[b,b']$ 
due to assertion~$6^{0}$ of Theorem~\ref{th_A}. 
Considering that $\varphi'(x) = 0$, $x\in (\infty, a')\bigcup [a,b]\bigcup (b',\infty)$ we have:
\begin{align*}
\mathtt{(i)}\hspace{5pt} & \varphi u\in \mathrm{AC}_{comp}(\mathbb{R}),\hspace{345pt} \\
\mathtt{(ii)}\hspace{5pt} & (\varphi u)^{[1]}=\left(p(\varphi u)'-(Q+ir)\varphi u\right)=p\varphi' u+ \varphi u^{[1]}\in \mathrm{AC}_{comp}(\mathbb{R}), \\
\mathtt{(iii)}\hspace{5pt} & l[\varphi u]=\varphi l[u]-\varphi' u^{[1]} - p'\varphi' u - p\varphi'' u - p\varphi' u' - (Q-ir) \varphi' u \in L_{comp}^{2}(\mathbb{R}), \\
\mathtt{(iv)}\hspace{5pt} & \mathrm{supp}\,\varphi u\text{ is a compact set}.
\end{align*}
Thus $\varphi u\in \mathrm{Dom}(\mathrm{L}_{00})$ and Lemma is proved.
\end{proof}

\begin{proof}[Proof of Theorem~\ref{th_B}]
Without loss of generality, we assume that 
\begin{equation}\label{eq_AB20}
(\mathrm{L}_0u,u)_{L^2(\mathbb{R})}\geq (u,u)_{L^2(\mathbb{R})},\qquad u\in \mathrm{Dom}(\mathrm{L}_0).
\end{equation}

To prove that operator $\mathrm{L}_0$  is self-adjoint, it is sufficient (and necessary) to show that every function 
that satisfies conditions 
\begin{equation}\label{eq_AB22}
\mathrm{L}v=0, \qquad v\in \mathrm{Dom}(\mathrm{L})
\end{equation}
is equal to zero almost everywhere. 

Let equality  \eqref{eq_AB22} hold. 
We choose a function sequence $\{\varphi_n\}_{n\in \mathbf{N}}$ so that following conditions are satisfied: 
\begin{itemize}
	\item [$\mathtt{(i)}$] $0\leq\varphi_n(x)\leq 1$, 
	\item [$\mathtt{(ii)}$] $\varphi_n(x)=1$, $x\in [-n,n]$;
	\item [$\mathtt{(iii)}$] $\mathrm{supp}\,\varphi_n\subset [-n-1,n+1]$;
	\item [$\mathtt{(iv)}$] $|\varphi_n'|\leq K$, where the constant $K$ does not depend on $n$. 
\end{itemize}
Consider now the function 
\begin{equation*}
\rho(x):=\begin{cases}
-&\int_x^0 p^{-1/2}(t)d\,t,\quad x\in (-\infty,0],\hspace{25pt} \\
&\int_0^x p^{-1/2}(t)d\,t,\quad x\in [0,\infty). 
\end{cases}
\end{equation*}
	
Then, taking into account condition $ii)$ of Theorem \ref{th_B} the composition $\varphi_n(\rho)$ 
belongs to the space $W_{2,comp}^2(\mathbb{R})$, $n\in\mathbb{N}$. 
Therefore, functions $\varphi_n(\rho)v$ satisfy inequality \eqref{eq_AB20}. 
After substitution we obtain: 
\begin{equation*}
K^2\int_{n\leq|\rho(x)|\leq n+1}|v(x)|^2d\,x\geq \int_{|\rho(x)|\leq n}|v(x)|^2d\,x.
\end{equation*}
Taking into account that $v\in L^2(\mathbb{R})$ and passing to the limit in the last inequality, we obtain that $v= 0$.

Theorem is proved.
\end{proof}

\begin{proof}[Proof of Theorem~\ref{th_C}]
Without loss of generality, we assume that the intervals $\Delta_n=[a_n,b_n]$ do not intersect and that $b_{-1}<0$ and $a_1>0$. 
Let us show that every function $v$ that satisfies the equality $Lv=0$ is equal to $0$ almost everywhere on $\mathbb{R}$. 

Let $\{\varphi_n\}_{n\in \mathbb{N}}$ be a sequence of real infinitely differentiable functions 
with following properties:
\begin{align*}
\mathtt{(i)} \quad & 0\leq\varphi_n(x)\leq 1, x\in\mathbb{R}, n\in\mathbb{N}; \hspace{270pt} \\
\mathtt{(ii)} \quad & \varphi_n(x)=1,\, x\in [b_{-n},a_n]; \\
\mathtt{(iii)} \quad & \mathrm{supp}\,\varphi_n \subset (a_{-n},b_n); \\
\mathtt{(iv)} \quad & \exists K>0: \quad  |\varphi_{n}'(x)|\leq
\begin{cases}
K|\Delta_{-n}|^2,\quad & \text{if}\quad x<0, \\
K|\Delta_{n}|^2,\quad & \text{if}\quad x>0.
\end{cases}
\end{align*}

Due to Lemma \ref{lm_phi_u_int} function $\varphi_{n}v\in \mathrm{Dom}(\mathrm{L}_{00}) \subset \mathrm{Dom}(\mathrm{L}_0)$ and  
$\mathrm{supp}\,\varphi_{n}'\subset \Delta_{-n}\cup \Delta_{n},\quad n\in \mathbb{N}$.
\begin{equation}\label{eq_AB57}
(\mathrm{L}_{00}\varphi_{n}v,\varphi_{n}v)_{L^{2}(\mathbb{R})}
=\int_{\mathbb{R}}p(\varphi_{n}')^{2}|v|^{2}d\,x
+\int_{\mathbb{R}}p\varphi_{n}\varphi_{n}'(v\overline{v}'-v'\overline{v})d\,x.
\end{equation}
Due to our assumptions 
\begin{equation*}
\mathrm{Re}\,\int_{\mathbb{R}}p\varphi_{n}\varphi_{n}'(v\overline{v}'-v'\overline{v})d\,x=0,
\end{equation*}
Therefore, from inequality \eqref{eq_AB20} we have that 
\begin{equation}\label{eq_AB60}
\int_{b_{-n}}^{a_{n}}|v|^{2}d\,x\leq C K^{2} \int_{\Delta_{-n}\cup \Delta_{n}}|v|^{2}d\,x, 
\end{equation}
where the constant $C>0$ is the one from assumption $\mathtt{(ii)}$ of Theorem~\ref{th_C}. 
	
Taking into account that $v\in L^{2}(\mathbb{R})$ and passing in inequality \eqref{eq_AB60} to the limit as  $n\rightarrow\infty$ 
we conclude that $v= 0$.
	
Theorem is proved.
\end{proof}

A similar approach can also be applied to study symmetric Sturm--Liouville operators defined on the semiaxis.

\end{document}